\newtheorem{theorem}{Theorem}[section]
\newtheorem{proposition}[theorem]{Proposition}
\theoremstyle{definition}
\newtheorem{remark}[theorem]{Remark}
\newtheorem{example}[theorem]{Example}
\numberwithin{equation}{section}
\begin{document}

\title{Samelson complex structures for the tangent Lie group}

\author{David N. Pham}
\address{Department of Mathematics $\&$ Computer Science, QCC CUNY, Bayside, NY 11364}
\curraddr{}
\email{dpham90@gmail.com}
\thanks{This work was supported by a QCC CUNY Fellowship Leave Award}

\subjclass[2020]{53C15,32Q60}

\keywords{Almost complex geometry, Lie groups, tangent bundles}

\dedicatory{}

\begin{abstract}
It is shown that for any compact Lie group $G$ (odd or even dimensional), the tangent bundle $TG$ admits a left-invariant integrable almost complex structure, where the Lie group structure on $TG$ is the natural one induced from $G$.  The aforementioned complex structure on $TG$ is inspired by Samelson's construction for even dimensional compact Lie groups.  
\end{abstract}

\date{}

\maketitle

\section{Introduction}
Let $G$ be a Lie group.  An almost complex structure on $G$ is an endomorphism $J:TG\rightarrow TG$ of the tangent bundle satisfying $J^2=-id$. This condition implies that $G$ must be even dimensional.  From the
Newlander-Nirenberg theorem, $J$ arises from a complex manifold structure on $G$ if and only if
$$
N_J(X,Y):=J[JX,Y]+J[X,JY]+[X,Y]-[JX,JY]=0
$$
for all $X,Y\in \mathfrak{X}(G)$.  One easily verifies that $N_J$ is a skew-symmetric tensor.  ($N_J$ the the so-called Nijenhuis tensor.) 

Since a Lie group is also an algebraic object, one is primarily interested in structures which are (at least) left-invariant.  An almost complex structure $J$ is left-invariant if
$$
J\circ(l_g)_\ast=(l_g)_\ast\circ J
$$
for all $g\in G$, where $l_g: G\rightarrow G$, $x\mapsto gx$ is left translation by $g$ and $(l_g)_\ast: TG\rightarrow TG$ is the natural pushforward on the tangent bundle. Let $\mathfrak{g}:=\mbox{Lie}(G)$ denote the space of left-invariant vector fields on $G$. Then left-invariance of $J$ is equivalent to the statement that $JX\in \mathfrak{g}$ for all $X\in \mathfrak{g}$.  Since $N_J$ is a tensor and a basis of $\mathfrak{g}$ forms a global frame of $G$, it follows that $N_J=0$ if and only if $N_J(X,Y)=0$ for all $X,Y\in \mathfrak{g}$. An almost complex structure which gives rise to a complex manifold structure (or equivalently satisfies $N_J=0$) is called an integrable almost complex structure.

Identifying $\mathfrak{g}$ with the tangent space $T_eG$ at the identity, we see that the problem of finding a left-invariant integrable almost complex structure on $G$ is purely algebraic.  One looks for a linear map $J:\mathfrak{g}\rightarrow \mathfrak{g}$ satisfying
$$
J^2=-id,\hspace*{0.2in} N_J(X,Y)=0\hspace*{0.1in}\forall X,Y\in \mathfrak{g}.
$$
Since the problem can be reduced to the Lie algebra level, we will also call a linear map $J: \mathfrak{g}\rightarrow \mathfrak{g}$ an integrable almost complex structure if it satisfies the above two conditions.

Of course, it is the condition $N_J=0$ which poses the challenge. In fact, there are Lie groups which admit no left-invariant integrable almost complex structures.  For example, it was shown in \cite{GR2002} that the Lie algebra with basis $x_1,\dots, x_{2n}$ ($n\ge 2$) and nonzero brackets
$$
[x_1,x_i]=x_{i+1},\hspace*{0.1in} 1<i<2n
$$
admits no integrable almost complex structures.  

Interestingly, it was shown by Samelson \cite{Sam1953} (and independently by Wang \cite{Wang1954} in the more general setting of homogenenous manifolds) that every even dimensional \textit{compact} Lie group always admits a left-invariant integrable almost complex structure.  The aforementioned complex structure is obtained by exploiting the properties of the root space decomposition of an even dimensional compact Lie group.  It is natural to ask if the Samelson construction can be adapted to certain classes of non-compact Lie groups.  Taking inspiration from \cite{Sam1953}, we show that the tangent bundle of any compact Lie group $G$ (odd or even dimensional) admits a left-invariant integrable almost complex structure, where the Lie group structure on $TG$ is the natural one induced from $G$.  $TG$ with this Lie group structure is called the \textit{tangent Lie group}.    

The rest of the paper is organized as follows.  In Section 2, we give a brief review of the tangent Lie group and the root space decomposition of a compact Lie group.  In Section 3, we show that the tangent Lie group of any compact Lie group (odd or even dimensional) always admits a left-invariant integrable almost complex structure.  We conclude the paper in Section 4 with some examples.

\section{Preliminaries}
In this section, we review some of the relevant background in an effort to make the paper reasonably self-contained while simultaneously establishing our notation.
\subsection{The tangent Lie group}
Let $m: G\times G\rightarrow G$ and $\iota:G\rightarrow G$ denote the group multiplication and group inverse respectively.  Identifying 
$$
T_{(g,h)} G\times G \simeq T_gG\times T_h G,\hspace*{0.1in}\forall~(g,h)\in G\times G,
$$
the natural Lie group structure on $TG$ is given by the pushforward of $m$ and $\iota$:
$$
m_\ast: TG\times TG\rightarrow TG,\hspace*{0.1in}\iota_\ast: G\rightarrow G.
$$
For $X\in \mathfrak{g}=T_eG$ and $g\in G$, let $X_g:=(l_g)_\ast X$ and let $\varphi: TG\rightarrow G\times \mathfrak{g}$ be the vector space isomorphism (and, in particular, diffemorphism) given by 
$$
\varphi(X_g):=(g,X).
$$
From this point forth, we identify $TG$ with $G\times \mathfrak{g}$ via $\varphi$.  Transferring the group structure from $TG$ to $G\times \mathfrak{g}$ so that $\varphi$ becomes a Lie group isomorphism, we see that
\begin{equation}
    \label{EqTGmult}
    (g,X)\cdot (h,Y)=(gh, \mbox{Ad}_{h^{-1}}X+Y)
\end{equation}
and 
\begin{equation}
    \label{EqTGinverse}
    (g,X)^{-1}=(g^{-1},-\mbox{Ad}_{g}X),
\end{equation}
where $\mbox{Ad}: G\rightarrow \mbox{GL}(\mathfrak{g})$ denotes the adjoint representation.  Let $\mathcal{G}:=\mbox{Lie}(TG)$ denote the Lie algebra of $TG$.  As a \textit{vector space}, $\mathcal{G}=\mathfrak{g}\oplus \mathfrak{g}$.  However, from (\ref{EqTGmult}) and (\ref{EqTGinverse}), its Lie algebra structure is not the direct sum of Lie algebras.  Using (\ref{EqTGmult}) and (\ref{EqTGinverse}), one finds that the Lie bracket on $\mathcal{G}$ is given by
\begin{equation}
    \label{EqTGbracket}
    [(X_1,X_2),(Y_1,Y_2)] = ([X_1,Y_1],[X_1,Y_2]+[X_2,Y_1])
\end{equation}
for $X_i,Y_i\in \mathfrak{g}$, $i=1,2$.  Setting
$$
X^c:=(X,0),\hspace*{0.1in} X^v:=(0,X),
$$
(\ref{EqTGbracket}) can be summarized by the following rules:
\begin{equation}
    \label{EqTGcvRules}
    [X^c,Y^c]=[X,Y]^c,\hspace*{0.1in}[X^c,Y^v]=[X,Y]^v,\hspace*{0.1in}[X^v,Y^v]=0
\end{equation}
for $X,Y\in \mathfrak{g}$.  Using the notion of complete and vertical lifts of vector fields from a manifold to its tangent bundle, one sees that the left-invariant vector fields on $TG$ corresponding to $X^c$ and $X^v$ are respectively the complete and vertical lifts of the left-invariant vector field associated to $X$ on $G$ (see \cite{Yano1973} for details).

\subsection{Root space decomposition for compact Lie groups} If $G$ is a compact Lie group, then $G$ always admits a positive definite inner product which is invariant with respect to the adjoint representation $\mbox{Ad}: G\rightarrow \mbox{GL}(\mathfrak{g})$.  For example, if $\mu$ is a left-invariant volume form on $G$ satisfying 
$$
\int_G \mu =1
$$
and $\beta$ is any positive definite inner product on $\mathfrak{g}$, then one obtains an $\mbox{Ad}$-invariant positive definite inner product $\langle\cdot,\cdot\rangle$ via
$$
\langle X,Y\rangle :=\int_G \beta(\mbox{Ad}_{g^{-1}}(X),\mbox{Ad}_{g^{-1}}(Y))\mu_g
$$
for $X,Y\in \mathfrak{g}$.  If $\beta$ is already $\mbox{Ad}$-invariant to begin with, then the above construction simply returns $\beta$.  $\mbox{Ad}$-invariance then implies invariance with respect to the adjoint representation $\mbox{ad}: \mathfrak{g}\rightarrow \mathfrak{gl}(\mathfrak{g})$, $\mbox{ad}_XY:=[X,Y]$. Explicitly, $\mbox{ad}$-invariance of $\langle\cdot,\cdot \rangle$ means that 
$$
\langle{\mbox{ad}_XY,Z}\rangle =-\langle{Y,\mbox{ad}_XZ}\rangle
$$
for $X,Y,Z\in \mathfrak{g}$.  The above condition implies that $\mbox{ad}_X: \mathfrak{g}_{\mathbb{C}}\rightarrow \mathfrak{g}_{\mathbb{C}}$ is diagonalizable, where $\mathfrak{g}_{\mathbb{C}}:=\mathfrak{g}+i\mathfrak{g}$ is the complexification of $\mathfrak{g}$.  Let $\mathfrak{t}\subset \mathfrak{g}$ be any maximal abelian Lie subalgebra.  Since $\mathfrak{t}$ is abelian, it follows that $\mbox{ad}_{H_1}\circ \mbox{ad}_{H_2}=\mbox{ad}_{H_2}\circ \mbox{ad}_{H_1}$ for all $H_1,H_2\in \mathfrak{t}$.  Hence, the set of linear maps
$$
\{\mbox{ad}_H: \mathfrak{g}_{\mathbb{C}}\rightarrow \mathfrak{g}_{\mathbb{C}}~|~H\in \mathfrak{t}_{\mathbb{C}}\}
$$
is simultaneously diagonalizable.  Consequently, $\mathfrak{g}_{\mathbb{C}}$ decomposes as
$$
\mathfrak{g}_{\mathbb{C}}=\mathfrak{t}_{\mathbb{C}}\oplus \bigoplus_{\alpha\in R}\mathfrak{g}_{\alpha}
$$
where $R\subset (\mathfrak{t}_\mathbb{C})^\ast\backslash\{0\}$ is a finite subset forming the so-called roots with respect to the choice of $\mathfrak{t}$ and 
$$
\mathfrak{g}_{\alpha}:=\{X\in \mathfrak{g}_{\mathbb{C}}~|~\mbox{ad}_HX=\alpha(H)X,~\forall~H\in \mathfrak{t}_{\mathbb{C}}\}.
$$
Since $\mathfrak{t}$ is a maximal abelian Lie subalgebra, we also see that $\mathfrak{g}_0=\mathfrak{t}_{\mathbb{C}}$, that is, $\mathfrak{t}_{\mathbb{C}}$ is the root space corresponding to the zero root. It is easy to verify that 
$$
[\mathfrak{g}_{\alpha},\mathfrak{g}_{\beta}]\subset \mathfrak{g}_{\alpha+\beta}
$$
if $\alpha+\beta\in R$.  Otherwise, $[\mathfrak{g}_{\alpha},\mathfrak{g}_{\beta}]=0$. It takes much more work to show that (1) $\dim \mathfrak{g}_{\alpha}=1$ for all $\alpha\in R$ and (2) if $\alpha\in R$, then the only other multiple of $\alpha$ is $-\alpha$.  

Key to the Samelson construction of integrable almost complex structures on even dimensional compact Lie groups is the fact that $R$ decomposes as
$$
R=\Delta^+\cup \Delta^-
$$ 
where $\Delta^+\cap \Delta^-=\emptyset$ and $\Delta^-=-\Delta^+$.  The properties of $\Delta^{\pm}$ are such that if $\alpha,\beta\in \Delta^+$ and $\alpha+\beta\in R$, then $\alpha+\beta\in \Delta^+$.  $\Delta^+$ and $\Delta^-$ are called the positive and negative roots of $R$ respectively.  From the above discussion, we can rewrite the root space decomposition as 
$$
\mathfrak{g}_{\mathbb{C}}=\mathfrak{t}_{\mathbb{C}}\oplus \bigoplus_{\alpha\in \Delta^+}(\mathfrak{g}_{\alpha}\oplus g_{-\alpha}).
$$
It is easy to see from the definition of $g_\alpha$ that $g_\alpha$ and $g_{-\alpha}$ are conjugates of one another:
$$
g_{-\alpha}=\overline{g_{\alpha}}.
$$
We refer the reader to the standard references on Lie theory (e.g. \cite{Var1984, Sep2007}) for proofs of the above facts.

\section{Main Result}
\begin{theorem}
    \label{ThmMainTheorem}
    For any compact Lie group $G$ (even or odd dimensional), the tangent Lie group $TG$ admits a left-invariant integrable almost complex structure.  More generally, $T^kG$, for $k\ge 1$, admits an integrable left-invariant almost complex structure, where $T^kG:=T(T^{k-1}G)$ and $T^0G:=G$.
\end{theorem}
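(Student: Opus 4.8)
The plan is to reduce everything, as the excerpt already does for $G$ itself, to the Lie algebra $\mathcal{G}_k:=\mathrm{Lie}(T^kG)$ and to adapt Samelson's argument there. Recall the standard dictionary: a left-invariant almost complex structure on a Lie group $H$ is a linear $J:\mathfrak{h}\to\mathfrak{h}$ with $J^2=-\mathrm{id}$, and it is equivalent to a splitting $\mathfrak{h}_{\mathbb{C}}=\mathfrak{M}\oplus\overline{\mathfrak{M}}$, where $\mathfrak{M}$ is the $(+i)$-eigenspace of the $\mathbb{C}$-linear extension of $J$; conversely any complex subspace $\mathfrak{M}\subset\mathfrak{h}_{\mathbb{C}}$ complementary to its conjugate comes from a unique such $J$. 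A one-line computation with the formula for $N_J$ in the introduction shows that $N_J$ vanishes on $\mathfrak{M}\times\overline{\mathfrak{M}}$ automatically and that $N_J(X,Y)=2(1+iJ)[X,Y]$ for $X,Y\in\mathfrak{M}$; since $N_J$ is a real tensor, $N_J=0$ precisely when $\mathfrak{M}$ is a complex subalgebra of $\mathfrak{h}_{\mathbb{C}}$. So the goal is, for each $k\ge 1$, to build a complex subalgebra $\mathfrak{M}\subset(\mathcal{G}_k)_{\mathbb{C}}$ with $(\mathcal{G}_k)_{\mathbb{C}}=\mathfrak{M}\oplus\overline{\mathfrak{M}}$.

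Next I would identify $\mathcal{G}_k$ concretely. Iterating (\ref{EqTGbracket}) yields a Lie algebra isomorphism $\mathcal{G}_k\cong\mathfrak{g}\otimes_{\mathbb{R}}A_k$, where $A_k:=\mathbb{R}[\epsilon_1,\dots,\epsilon_k]/(\epsilon_1^2,\dots,\epsilon_k^2)$ is the commutative, associative, unital algebra of $k$-th order dual numbers and the bracket is $[X\otimes a,Y\otimes b]=[X,Y]\otimes ab$; for $k=1$ this is the content of (\ref{EqTGcvRules}) with $X^c=X\otimes 1$, $X^v=X\otimes\epsilon_1$, and in general $A_k=A_{k-1}\otimes_{\mathbb{R}}\mathbb{R}[\epsilon_k]/(\epsilon_k^2)$. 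The one elementary fact that makes the whole thing work is $\dim_{\mathbb{R}}A_k=2^k$, which is even for every $k\ge1$: this is what lets us sidestep the parity obstruction that blocks a left-invariant almost complex structure on $G$ when $\dim G$ is odd.

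The construction is then a direct transcription of Samelson's. Complexifying, $(\mathcal{G}_k)_{\mathbb{C}}=\mathfrak{g}_{\mathbb{C}}\otimes_{\mathbb{R}}A_k$ with conjugation acting through the $\mathfrak{g}_{\mathbb{C}}$ factor, and the root space decomposition of the compact group $G$ gives
\[
\mathfrak{g}_{\mathbb{C}}\otimes A_k=(\mathfrak{t}_{\mathbb{C}}\otimes A_k)\oplus\bigoplus_{\alpha\in\Delta^+}\bigl((\mathfrak{g}_\alpha\otimes A_k)\oplus(\mathfrak{g}_{-\alpha}\otimes A_k)\bigr).
\]
Since $\dim_{\mathbb{R}}(\mathfrak{t}\otimes_{\mathbb{R}}A_k)=2^k\dim\mathfrak{t}$ is even, this real space carries a complex structure; let $\mathfrak{a}\subset\mathfrak{t}_{\mathbb{C}}\otimes A_k$ be its $(1,0)$-part, so $\mathfrak{t}_{\mathbb{C}}\otimes A_k=\mathfrak{a}\oplus\overline{\mathfrak{a}}$ and $\mathfrak{a}\cap\overline{\mathfrak{a}}=0$. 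Put $\mathfrak{M}:=\mathfrak{a}\oplus\bigoplus_{\alpha\in\Delta^+}(\mathfrak{g}_\alpha\otimes A_k)$. Because $\overline{\mathfrak{g}_\alpha\otimes A_k}=\mathfrak{g}_{-\alpha}\otimes A_k$ and $\Delta^-=-\Delta^+$, one sees at once that $(\mathcal{G}_k)_{\mathbb{C}}=\mathfrak{M}\oplus\overline{\mathfrak{M}}$. Involutivity reduces, via $[X\otimes a,Y\otimes b]=[X,Y]\otimes ab$, to the root-space brackets $[\mathfrak{t}_{\mathbb{C}},\mathfrak{t}_{\mathbb{C}}]=0$, $[\mathfrak{t}_{\mathbb{C}},\mathfrak{g}_\alpha]\subset\mathfrak{g}_\alpha$, and $[\mathfrak{g}_\alpha,\mathfrak{g}_\beta]\subset\mathfrak{g}_{\alpha+\beta}$ (or $=0$), together with the fact that $\Delta^+$ is closed under sums that remain in $R$; hence $[\mathfrak{M},\mathfrak{M}]\subset\mathfrak{M}$. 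By the first paragraph this $\mathfrak{M}$ produces the desired structure, and $k=1$ is the statement for $TG$.

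I do not expect a real obstacle; the one step that has to be gotten right is the choice of $\mathfrak{M}$: one must tensor the \emph{entire} maximal-torus algebra $\mathfrak{t}$ with the even-dimensional $A_k$ and place a single complex structure on the product, rather than trying to handle $\mathfrak{t}$ directly. Everything else is routine — the bracket formula for $\mathcal{G}_k$ is an easy induction on $k$ from (\ref{EqTGbracket}), and the two defining properties of $\mathfrak{M}$ are immediate from the root space decomposition; in particular none of the deeper facts about roots (one-dimensionality of the $\mathfrak{g}_\alpha$, or $\alpha\in R\Rightarrow 2\alpha\notin R$) are used, only $[\mathfrak{g}_\alpha,\mathfrak{g}_\beta]\subset\mathfrak{g}_{\alpha+\beta}$ and the closedness of $\Delta^+$. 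For exposition I would treat $k=1$ first in the $X^c/X^v$ notation of Section 2 and then observe that, read through $\mathcal{G}_k\cong\mathfrak{g}\otimes A_k$, the same computation covers all $k\ge1$ at once.
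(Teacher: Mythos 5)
Your argument is correct, and it reaches the theorem by a genuinely different route from the paper. The paper defines $\widehat{J}$ explicitly on the lifted basis (sending $H_i^c\mapsto H_i^v$, $H_i^v\mapsto -H_i^c$, and acting by $\pm i$ on the lifts of $\mathfrak{g}_{\pm\alpha}$), checks by hand that $\widehat{J}$ preserves the real form $\mathcal{G}$, and then verifies $N_{\widehat{J}}=0$ through thirteen case-by-case computations with the relations \eqref{EqTGcvRules}; the extension to $T^kG$ for $k\ge 2$ is handled by a separate lifting statement (Proposition \ref{propTGTower}). You instead work with Samelson's original formulation --- integrable left-invariant structures correspond to complex subalgebras $\mathfrak{M}$ with $\mathfrak{h}_{\mathbb{C}}=\mathfrak{M}\oplus\overline{\mathfrak{M}}$ --- and identify $\mathrm{Lie}(T^kG)\cong\mathfrak{g}\otimes_{\mathbb{R}}A_k$ with $A_k$ the iterated dual numbers, which reduces both the splitting and the involutivity of $\mathfrak{M}=\mathfrak{a}\oplus\bigoplus_{\alpha\in\Delta^+}(\mathfrak{g}_\alpha\otimes A_k)$ to $\overline{\mathfrak{g}_\alpha}=\mathfrak{g}_{-\alpha}$, $[\mathfrak{g}_\alpha,\mathfrak{g}_\beta]\subset\mathfrak{g}_{\alpha+\beta}$, the closedness of $\Delta^+$, and the evenness of $\dim_{\mathbb{R}}A_k=2^k$. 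Your construction specializes to the paper's when $k=1$ and $\mathfrak{a}$ is the span of the $H_i^c-iH_i^v$, but it covers all $k$ uniformly with no induction and no case analysis, and it isolates exactly which structural facts are used (in particular, as you note, neither $\dim\mathfrak{g}_\alpha=1$ nor the lifting proposition is needed). What the paper's version buys in exchange is a completely explicit real matrix for $\widehat{J}$ in the lifted basis, which is what feeds directly into the examples of Section 4; if you adopt your formulation you should still record the eigenbasis description of $J$ so that those computations remain available.
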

\begin{proof}
Let $\mathfrak{t}$ be any maximal abelian Lie subalgebra of $\mathfrak{g}$ and let 
$$
\mathfrak{g}_{\mathbb{C}}=\mathfrak{t}_{\mathbb{C}}\oplus \bigoplus_{\alpha\in \Delta^+}(\mathfrak{g}_{\alpha}\oplus g_{-\alpha})
$$
be the root space decomposition associated to $\mathfrak{t}$ with $\Delta^+$ a system of positive roots. Let $H_1,\dots, H_k$ be a (real) basis of $\mathfrak{t}\subset \mathfrak{t}_{\mathbb{C}}$ and let $E_\alpha\in \mathfrak{g}_\alpha$, $E_{-\alpha}\in \mathfrak{g}_{-\alpha}$ be any nonzero elements.  Let $\mathcal{G}:=\mbox{Lie}(TG)$ and let $\mathcal{G}_{\mathbb{C}}:=\mathcal{G}+i\mathcal{G}$ be its complexification.  Then a basis of $\mathcal{G}_{\mathbb{C}}$ is obtained by taking the complete and vertical lifts of the aforementioned basis of $\mathfrak{g}_{\mathbb{C}}$.  Hence, $\mathcal{G}_{\mathbb{C}}$ has basis 
\begin{equation}
\label{eqBasisTG}
H_i^c,~E^c_{\alpha},~E^c_{-\alpha},~H_j^v,~E^v_{\alpha},~E^v_{-\alpha},~\hspace*{0.1in}1\le i,j\le k,~\alpha\in \Delta^+.
\end{equation}
We now construct a Samelson-type complex structure on $TG$.  Our notation follows the review of Samelson's construction given in section 2 of \cite{FG2023}.  Let $\widehat{J}:\mathcal{G}_{\mathbb{C}}\rightarrow \mathcal{G}_{\mathbb{C}}$ be the $\mathbb{C}$-linear map defined by
\begin{align}
    \label{eqJH}
    &\widehat{J}H_i^c:=H_i^v,\hspace*{0.1in}\widehat{J}H_i^v:=-H_i^c,\\
    \label{eqJE}
    \widehat{J}E_{\alpha}^c= iE^c_{\alpha},~\widehat{J}E_{\alpha}^v&= iE^v_{\alpha},~\widehat{J}E_{-\alpha}^c= -iE^c_{-\alpha},~\widehat{J}E_{-\alpha}^v= -iE^v_{-\alpha}.
\end{align}
From the definition, we immediately have $\widehat{J}^2=-id_{\mathcal{G}_{\mathbb{C}}}$. As was the case in Samelson \cite{Sam1953}, the definition of $\widehat{J}$ implies that $\widehat{J}\mathcal{G}\subset \mathcal{G}$.  In other words, $\widehat{J}:\mathcal{G}_{\mathbb{C}}\rightarrow \mathcal{G}_{\mathbb{C}}$ is induced by the $\mathbb{C}$-linear extension of the real linear map $\widehat{J}|_{\mathcal{G}}: \mathcal{G}\rightarrow \mathcal{G}$ (which, of course, also squares to negative the identity).

Indeed, from the definition, we already have $\widehat{J}\mathfrak{t}^c\subset \mathfrak{t}^v\subset \mathcal{G}$ and $\widehat{J}\mathfrak{t}^v\subset \mathfrak{t}^c\subset \mathcal{G}$ since $H_i\in \mathfrak{t}\subset \mathfrak{g}$.  Now, write $E^c_{\alpha}=X_\alpha^c+iY_\alpha^c$ with $X_\alpha,Y_\alpha\in \mathfrak{g}$.  Since $\dim \mathfrak{g}_{\pm \alpha}=1$ and $\mathfrak{g}_{\alpha}$ and $\mathfrak{g}_{-\alpha}$ are conjugate, we may assume $E_{-\alpha}=\overline{E_{\alpha}}$ without any loss of generality.  Hence, $E^c_{-\alpha}=X_\alpha^c-iY_\alpha^c$.  So the condition $\widehat{J}E^c_{\alpha}=iE^c_{\alpha}$ and $\widehat{J}E^c_{-\alpha}=-iE_{-\alpha}^c$ is equivalent to the requirement 
$$
\widehat{J}X_\alpha^c=-Y^c_\alpha,\hspace*{0.1in}\widehat{J}Y_\alpha^c=X^c_\alpha.
$$
Likewise, the condition $\widehat{J}E^v_{\alpha}=iE^v_{\alpha}$ and $\widehat{J}E^v_{-\alpha}=-iE_{-\alpha}^v$ implies 
$$
\widehat{J}X_\alpha^v=-Y^v_\alpha,\hspace*{0.1in}\widehat{J}Y_\alpha^v=X^v_\alpha.
$$
Since the elements 
$$
H_i^c,~X_\alpha^c,~Y_\alpha^c,~H^v_j,~X_{\alpha}^v,~Y_\alpha^v,~1\le i,j\le k,~\alpha\in \Delta^+
$$
form a basis of the (real) Lie algebra $\mathcal{G}:=\mbox{Lie}(TG)$, we see that $\widehat{J}\mathcal{G}\subset \mathcal{G}$. 

To show that $\widehat{J}|_{\mathcal{G}}:\mathcal{G}\rightarrow \mathcal{G}$ is integrable, it suffices to show that 
$$
N_{\widehat{J}}(A,B)=0,\hspace*{0.1in}\forall~A,B\in \mathcal{G}_{\mathbb{C}}.
$$
The latter is equivalent to verifying that $N_{\widehat{J}}(A,B)=0$ when $A,B$ runs over all possible combinations of the basis elements in (\ref{eqBasisTG}).  We rely heavily on the bracket relations of complete and vertical lifts (\ref{EqTGcvRules}) in verifying all possible cases.
\newline\\
\noindent \textbf{case 1:} $A=H_i^c$, $B=E_\alpha^c$
\begin{align*}
    N_{\widehat{J}}(H_i^c,E_\alpha^c)&=\widehat{J}[\widehat{J}H_i^c,E_\alpha^c]+\widehat{J}[H^c_i,\widehat{J}E^c_\alpha]+[H^c_i,E^c_\alpha]-[\widehat{J}H^c_i,\widehat{J}E^c_\alpha]\\
    &=\widehat{J}[H_i^v,E_\alpha^c]+i\widehat{J}[H^c_i,E^c_\alpha]+[H_i,E_\alpha]^c-i[H^v_i,E^c_\alpha]\\
    &=\widehat{J}[H_i,E_\alpha]^v+i\widehat{J}[H_i,E_\alpha]^c+\alpha(H_i)E_\alpha^c-i[H_i,E_\alpha]^v\\
    &=\alpha(H_i)\widehat{J}E_\alpha^v+i\alpha(H_i)\widehat{J}E_\alpha^c+\alpha(H_i)E_\alpha^c-i  \alpha(H_i)E_\alpha^v\\
    &=i\alpha(H_i)E_\alpha^v+(i)(i)\alpha(H_i)E_\alpha^c+\alpha(H_i)E_\alpha^c-i  \alpha(H_i)E_\alpha^v\\
    &=0.
\end{align*}

\noindent \textbf{case 2:} $A=H_i^c$, $B=E_{-\alpha}^c$
\begin{align*}
    N_{\widehat{J}}(H_i^c,E_{-\alpha}^c)&=\widehat{J}[\widehat{J}H_i^c,E_{-\alpha}^c]+\widehat{J}[H^c_i,\widehat{J}E^c_{-\alpha}]+[H^c_i,E^c_{-\alpha}]-[\widehat{J}H^c_i,\widehat{J}E^c_{-\alpha}]\\
    &=\widehat{J}[H_i^v,E_{-\alpha}^c]-i\widehat{J}[H^c_i,E^c_{-\alpha}]+[H_i,E_{-\alpha}]^c+i[H^v_i,E^c_{-\alpha}]\\
    &=\widehat{J}[H_i,E_{-\alpha}]^v-i\widehat{J}[H_i,E_{-\alpha}]^c-\alpha(H_i)E_{-\alpha}^c+i[H_i,E_{-\alpha}]^v\\
    &=-\alpha(H_i)\widehat{J}E_{-\alpha}^v+i\alpha(H_i)\widehat{J}E_{-\alpha}^c-\alpha(H_i)E_{-\alpha}^c-i\alpha(H_i)E_{-\alpha}^v\\
    &=i\alpha(H_i)E_{-\alpha}^v+i(-i)\alpha(H_i)E_{-\alpha}^c-\alpha(H_i)E_{-\alpha}^c-i\alpha(H_i)E_{-\alpha}^v\\
    &=0.
\end{align*}

\noindent \textbf{case 3:} $A=H_i^c$, $B=E_\alpha^v$
\begin{align*}
N_{\widehat{J}}(H_i^c,E_\alpha^v)&=\widehat{J}[\widehat{J}H_i^c,E_\alpha^v]+\widehat{J}[H_i^c,\widehat{J}E_\alpha^v]+[H_i^c,E_\alpha^v]-[\widehat{J}H_i^c,\widehat{J}E^v_\alpha]\\
&=\widehat{J}[H_i^v,E_\alpha^v]+i\widehat{J}[H_i^c,E_\alpha^v]+[H_i,E_\alpha]^v-i[H_i^v,E^v_\alpha]\\
&=i\widehat{J}[H_i,E_\alpha]^v+\alpha(H_i)E_\alpha^v\\
&=i\alpha(H_i)\widehat{J}E_\alpha^v+\alpha(H_i)E_\alpha^v\\
&=(i)(i)\alpha(H_i)E_\alpha^v+\alpha(H_i)E_\alpha^v\\
&=0.
\end{align*}

\noindent \textbf{case 4:} $A=H_i^c$, $B=E_{-\alpha}^v$
\begin{align*}
N_{\widehat{J}}(H_i^c,E_{-\alpha}^v)&=\widehat{J}[\widehat{J}H_i^c,E_{-\alpha}^v]+\widehat{J}[H_i^c,\widehat{J}E_{-\alpha}^v]+[H_i^c,E_{-\alpha}^v]-[\widehat{J}H_i^c,\widehat{J}E^v_{-\alpha}]\\
&=\widehat{J}[H_i^v,E_{-\alpha}^v]-i\widehat{J}[H_i^c,E_{-\alpha}^v]+[H_i,E_{-\alpha}]^v+i[H_i^v,E^v_{-\alpha}]\\
&=-i\widehat{J}[H_i,E_{-\alpha}]^v-\alpha(H_i)E_{-\alpha}^v\\
&=i\alpha(H_i)\widehat{J}E_{-\alpha}^v-\alpha(H_i)E_{-\alpha}^v\\
&=i(-i)\alpha(H_i)E_{-\alpha}^v-\alpha(H_i)E_{-\alpha}^v\\
&=0.
\end{align*}

\noindent \textbf{case 5:} $A=H_i^v$, $B=E_\alpha^c$
\begin{align*}
    N_{\widehat{J}}(H_i^v,E_\alpha^c)=N_{\widehat{J}}(\widehat{J}H_i^c,E_\alpha^c)=-\widehat{J}N_{\widehat{J}}(H^c_i,E_\alpha^c)=0
\end{align*}

\noindent \textbf{case 6:} $A=H_i^v$, $B=E_{-\alpha}^c$
\begin{align*}
    N_{\widehat{J}}(H_i^v,E_{-\alpha}^c)=N_{\widehat{J}}(\widehat{J}H_i^c,E_{-\alpha}^c)=-\widehat{J}N_{\widehat{J}}(H^c_i,E_{-\alpha}^c)=0
\end{align*}

\noindent \textbf{case 7:} $A=H_i^v$, $B=E_\alpha^v$
\begin{align*}
    N_{\widehat{J}}(H_i^v,E_{\alpha}^v)=N_{\widehat{J}}(\widehat{J}H_i^c,E_{\alpha}^v)=-\widehat{J}N_{\widehat{J}}(H^c_i,E_{\alpha}^v)=0
\end{align*}
\noindent \textbf{case 8:} $A=H_i^v$, $B=E_{-\alpha}^v$
\begin{align*}
    N_{\widehat{J}}(H_i^v,E_{-\alpha}^v)=N_{\widehat{J}}(\widehat{J}H_i^c,E_{-\alpha}^v)=-\widehat{J}N_{\widehat{J}}(H^c_i,E_{-\alpha}^v)=0
\end{align*}

\noindent \textbf{case 9:} $A=E^c_\alpha$, $B=E^c_\beta$ or $A=E^c_\alpha$, $B=E^v_\beta$,~$\alpha,\beta\in \Delta^+$\newline
For this case, if $\alpha+\beta$ is not a root, then $[\mathfrak{g}_\alpha,\mathfrak{g}_\beta]=0$.  This combined with the bracket relations (\ref{EqTGcvRules}) imply $N_{\widehat{J}}(A,B)=0$.  So let us assume that $\alpha+\beta$ is a root and since $\alpha,~\beta\in \Delta^+$, it follows that $\alpha+\beta\in \Delta^+$ as well.  This gives the following:
\begin{align*}
    N_{\widehat{J}}(A,B)&=\widehat{J}[\widehat{J}A, B]+\widehat{J}[A,\widehat{J}B]+[A,B]-[\widehat{J}A,\widehat{J}B]\\
    &=i\widehat{J}[A, B]+i\widehat{J}[A,B]+[A,B]-(i)^2[A,B]\\
    &=i(i)[A, B]+i(i)[A,B]+[A,B]-(i)^2[A,B]\\
    &=0
\end{align*}
where in the third equality we use the fact that $\alpha+\beta$ is a positive root which implies that $[A,B]$ is a lift of an element of $\mathfrak{g}_{\alpha+\beta}$.\newline

\noindent \textbf{case 10:} $A=E_\alpha^v$, $B=E_\beta^v$ where $\alpha,\beta\in \Delta^+\cup (-\Delta^+)$\newline
For this case, we immediately have $N_{\widehat{J}}(E_\alpha^v,E_\beta^v)=0$ since the Lie bracket of two vertical lifts is always zero and $\widehat{J}E^v_\alpha$ is still vertical.\newline 

\noindent \textbf{case 11:} $A=E^c_\alpha$, $B=E^c_{-\beta}$ or $A=E^c_\alpha$, $B=E^v_{-\beta}$ or $A=E^v_\alpha$, $B=E^c_{-\beta}$,~$\alpha,\beta\in \Delta^+$
\begin{align*}
    N_{\widehat{J}}(A,B)&=\widehat{J}[\widehat{J}A,B]+\widehat{J}[A,\widehat{J}B]+[A,B]-[\widehat{J}A,\widehat{J}B]\\
    &=(i)\widehat{J}[A,B]-i\widehat{J}[A,B]+[A,B]-(i)(-i)[A,B]\\
    &=0.
\end{align*}

\noindent \textbf{case 12:} $A=E^c_{-\alpha}$, $B=E^c_{-\beta}$ or $A=E^c_{-\alpha}$, $B=E^v_{-\beta}$ ~$\alpha,\beta\in \Delta^+$\newline
If $-\alpha-\beta$ is not a root, then $[\mathfrak{g}_{-\alpha},\mathfrak{g}_{-\beta}]=0$.  The bracket relations (\ref{EqTGcvRules}) and the definition of $\widehat{J}$ then implies that $N_{\widehat{J}}(A,B)=0$.  So let us assume that $-\alpha-\beta$ is a root.  This gives the following:
 \begin{align*}
     N_{\widehat{J}}(A,B)&=\widehat{J}[\widehat{J}A,B]+\widehat{J}[A,\widehat{J}B]+[A,B]-[\widehat{J}A,\widehat{J}B]\\
     &=(-i)\widehat{J}[A,B]-i\widehat{J}[A,B]+[A,B]-(-i)^2[A,B]\\
     &=(-i)^2[A,B]-i(-i)[A,B]+[A,B]-(-i)^2[A,B]\\
     &=0,
 \end{align*}
where in the third equality, we have used the fact that $-\alpha-\beta$ is a negative root which implies that $[A,B]$ is a lift of an element of $\mathfrak{g}_{-\alpha-\beta}$. \newline

\noindent \textbf{case 13:} $A\in\{H^c_i, H^v_i\}$, $B\in \{H^c_j,H^v_j\}$
\newline
Since $\mathfrak{t}$ is abelian, we see immediately from the definition of $\widehat{J}$ and the bracket relations (\ref{EqTGcvRules}) that $N_{\widehat{J}}(A,B)=0$ for this case.\newline

Cases 1-13 above show that $N_{\widehat{J}}=0$.  The statement that the iterated tangent Lie group $T^kG:=T(T^{k-1}G)$ admits left-invariant integrable almost complex structures for $k\ge 1$ is a consequence of the fact that $TG$ admits a left-invariant integrable almost complex structure (as shown above) and Proposition \ref{propTGTower} below.  This completes the proof.
\end{proof}
\begin{proposition}
\label{propTGTower}
Let $G$ be any Lie group with a left-invariant integrable almost complex structure $J$.  Then the tangent Lie group $TG$ also admits a left-invariant integrable almost complex structure $J$.
\end{proposition}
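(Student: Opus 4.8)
The plan is to lift the complex structure $J$ on $\mathfrak{g}$ to $\mathcal{G} = \mathrm{Lie}(TG) = \mathfrak{g} \oplus \mathfrak{g}$ using exactly the complete/vertical lift formalism already set up in the proof of Theorem~\ref{ThmMainTheorem}. Concretely, I would define $\widehat{J}:\mathcal{G}\to\mathcal{G}$ by $\widehat{J}(X^c) := (JX)^c$ and $\widehat{J}(X^v) := (JX)^v$ for $X \in \mathfrak{g}$; equivalently, in the $\mathfrak{g}\oplus\mathfrak{g}$ description, $\widehat{J}(X_1,X_2) = (JX_1, JX_2)$. Since $J^2 = -\mathrm{id}_\mathfrak{g}$, it is immediate that $\widehat{J}^2 = -\mathrm{id}_\mathcal{G}$, so $\widehat{J}$ is an almost complex structure; and it is left-invariant because it is defined on the Lie algebra level (equivalently, the complete and vertical lifts of $J$-related frames are $\widehat{J}$-related). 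The only real content is integrability, i.e. $N_{\widehat{J}} = 0$.

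For integrability I would compute $N_{\widehat{J}}$ on pairs of basis elements of the form $X^c, X^v$ and reduce everything to the bracket rules \eqref{EqTGcvRules} together with $N_J = 0$ on $\mathfrak{g}$. The key observation is that the assignments $X \mapsto X^c$ and $X \mapsto X^v$ are compatible with brackets and with $\widehat{J}$ in the sense that $[\cdot,\cdot]^c$ and $[\cdot,\cdot]^v$ are ``linear over lifts'' and $\widehat{J}$ commutes with both lift operations. Therefore each term of $N_{\widehat{J}}(X^\sharp, Y^\flat)$ (with $\sharp,\flat \in \{c,v\}$) is a lift of the corresponding term of $N_J(X,Y)$, of the appropriate flavor: explicitly, I expect
$$
N_{\widehat{J}}(X^c,Y^c) = N_J(X,Y)^c,\qquad
N_{\widehat{J}}(X^c,Y^v) = N_{\widehat{J}}(X^v,Y^c) = N_J(X,Y)^v,\qquad
N_{\widehat{J}}(X^v,Y^v) = 0,
$$
the last because every bracket of two vertical lifts vanishes. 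Since $N_J(X,Y) = 0$ for all $X,Y\in\mathfrak{g}$ by hypothesis, all of these vanish, hence $N_{\widehat{J}} = 0$.

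I do not anticipate a genuine obstacle here; the ``hard part'' is purely bookkeeping, namely checking the $X^c,Y^v$ and $X^v,Y^c$ cases carefully, since these mix the two summands of $\mathcal{G}$ and one must track which lift flavor each of the four Nijenhuis terms produces. One subtlety worth stating explicitly: $\widehat{J}(X^v) = (JX)^v$ is forced once we want $\widehat{J}$ to restrict the complete-lift and vertical-lift copies of $\mathfrak{g}$ compatibly, and this is what makes the $[X^v,Y^v] = 0$ collapse do its job. It is also worth remarking that the $\widehat{J}$ constructed in Theorem~\ref{ThmMainTheorem} is \emph{not} of this simple lifted form (it sends $H_i^c$ to $H_i^v$), so Proposition~\ref{propTGTower} is a genuinely different, more elementary construction that applies to any $J$-admitting Lie group; combined with the base case supplied by Samelson's theorem together with Theorem~\ref{ThmMainTheorem}, it yields integrable left-invariant almost complex structures on the whole tower $T^kG$ by induction on $k$.
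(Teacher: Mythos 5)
Your proposal is correct and matches the paper's proof essentially verbatim: the paper also defines $\widehat{J}X^c:=(JX)^c$, $\widehat{J}X^v:=(JX)^v$ and observes via the lift bracket rules that $N_{\widehat{J}}(X^c,Y^c)=N_J(X,Y)^c$, $N_{\widehat{J}}(X^c,Y^v)=N_J(X,Y)^v$, and $N_{\widehat{J}}(X^v,Y^v)=0$. No further comment is needed.
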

\begin{proof}
Let $\mathcal{G}:=\mbox{Lie}(TG)$.  Define $\widehat{J}:\mathcal{G}\rightarrow \mathcal{G}$ via 
$$
\widehat{J}X^c:=(JX)^c,\hspace*{0.2in} \widehat{J}X^v:=(JX)^v.
$$
From (\ref{EqTGcvRules}) and the definition of $\widehat{J}$, it immediately follows that $N_{\widehat{J}}(X^v,Y^v)=0$ and
$$
N_{\widehat{J}}(X^c,Y^c)=N_J(X,Y)^c,\hspace*{0.1in}N_{\widehat{J}}(X^c,Y^v)=N_{J}(X,Y)^v.
$$
Since $N_{J}=0$, we see that $\widehat{J}$ is integrable.
\end{proof}

\begin{remark}  
If $G$ is already an even dimensional compact Lie group, one can construct a left-invariant integrable almost complex structure on $G$ via Samelson's construction \cite{Sam1953} (c.f. Sec. 2 of \cite{FG2023}) and then lift it to $TG$ via Proposition \ref{propTGTower}.  This provides an alternate means of equipping $TG$ with a left-invariant integrable almost complex structure.  However, if $G$ is a compact odd dimensional Lie group, then $G$ admits no almost complex structure (let alone a left-invariant integrable one) that could be lifted to $TG$.  In this case, one can use the construction in the proof of Theorem \ref{ThmMainTheorem} to equip the tangent Lie group $TG$ with a left-invariant integrable almost complex structure.  Once $TG$ is equipped with such a structure, one obtains an infinite tower of non-compact Lie groups 
$$
TG\subset T^2G\subset \cdots \subset T^kG \subset \cdots
$$
which all admit left-invariant integrable almost complex structures.
\end{remark}

\section{Examples}
\begin{example}
The Lie algebra $\mathfrak{so}(3)=\mbox{Lie}(SO(3))$ has basis $e_1,e_2,e_3$ with nonzero bracket relations
$$
[e_1,e_2]=-e_3,~[e_1,e_3]=e_2,~[e_2,e_3]=-e_1.
$$
Let $\mathfrak{t}\subset \mathfrak{so}(3)$ be the (real) abelian Lie subalgebra spanned by $e_1$.  Then the root space decomposition of $\mathfrak{so}(3)_{\mathbb{C}}$ with respect to $\mathfrak{t}$ is 
$$
\mathfrak{so}(3)_{\mathbb{C}}=\mathfrak{t}_{\mathbb{C}}\oplus \mathfrak{so}(3)_\alpha\oplus \mathfrak{so}(3)_{-\alpha}
$$
where 
$$
\mathfrak{so}(3)_\alpha=\mbox{span}_{\mathbb{C}}\{e_2+ie_3\}
$$
and 
$$
\mathfrak{so}(3)_{-\alpha}=\mbox{span}_{\mathbb{C}}\{e_2-ie_3\}
$$
and the root $\alpha: \mathfrak{t}_{\mathbb{C}}\rightarrow \mathbb{C}$ is defined by $\alpha(e_1)=i$. We take $\Delta^+=\{\alpha\}$ as our positive root system.

Let $E_\alpha:=e_2+ie_3$ and $E_{-\alpha}:=e_2-ie_3$. Let $\mathcal{G}:=\mbox{Lie}(TSO(3))$.  For our basis of $\mathcal{G}_{\mathbb{C}}$, we take
$$
e_1^c,~E_\alpha^c,~E_{-\alpha}^c,~e_1^v,~E_\alpha^v,~E_{-\alpha}^v.
$$
From the proof of Theorem \ref{ThmMainTheorem}, the Samelson complex structure on $TSO(3)$ is the linear map $\widehat{J}:\mathcal{G}\rightarrow \mathcal{G}$ whose $\mathbb{C}$-linear extension satisfies
$$
\widehat{J}e_1^c=e_1^v,~\widehat{J}E^c_{\alpha}=iE^c_{\alpha},~\widehat{J}E^c_{-\alpha}=-iE^c_{-\alpha},
$$
$$
\widehat{J}e_1^v=-e_1^c,~\widehat{J}E^v_{\alpha}=iE^v_{\alpha},~\widehat{J}E^v_{-\alpha}=-iE^v_{-\alpha}.
$$
In terms of the (real) basis on $\mathcal{G}$ given by
$$
e_1^c,~e_2^c,~e_3^c,~e_1^v,~e_2^v,~e_3^v,
$$
we have
$$
\widehat{J}e_1^c=e_1^v,~\widehat{J}e^c_2=-e_3^c,~\widehat{J}e^c_3=e^c_2,
$$
$$
\widehat{J}e_1^v=-e_1^c,~\widehat{J}e^v_2=-e_3^v,~\widehat{J}e^v_3=e^v_2.
$$
\end{example}

\begin{example}
    The Lie algebra $\mathfrak{u}(3):=\mbox{Lie}(U(3))$ has basis $e_1,e_2,\dots, e_9$ with nonzero bracket relations
    $$
    [e_1,e_4]=e_7,~[e_1,e_5]=e_8,~[e_1,e_7]=-e_4,[e_1,e_8]=-e_5,~[e_2,e_4]=-e_7,
    $$
    $$
    [e_2,e_6]=e_9,~[e_2,e_7]=e_4,~[e_2,e_9]=-e_6,~[e_3,e_5]=-e_8,~[e_3,e_6]=-e_9,
    $$
    $$
    [e_3,e_8]=e_5,~[e_3,e_9]=e_6,~[e_4,e_5]=-e_6,~[e_4,e_6]=e_5,~[e_4,e_7]=2e_1-2e_2,
    $$
    $$
    [e_4,e_8]=-e_9,~[e_4,e_9]=e_8,~[e_5,e_6]=-e_4,~[e_5,e_7]=-e_9,~[e_5,e_8]=2e_1-2e_3,
    $$
    $$
    [e_5,e_9]=e_7,~[e_6,e_7]=-e_8,~[e_6,e_8]=e_7,~[e_6,e_9]=2e_2-2e_3,
    $$
    $$
    [e_7,e_8]=-e_6,~[e_7,e_9]=-e_5,~[e_8,e_9]=-e_4.
    $$
    A maximal abelian Lie subalgebra of $\mathfrak{u}(3)$ is $\mathfrak{t}=\{e_1,e_2,e_3\}$. The root space decomposition of $\mathfrak{u}(3)_{\mathbb{C}}$ is 
    \begin{align*}
        \mathfrak{u}(3)_{\mathbb{C}}=\mathfrak{t}_{\mathbb{C}}\oplus \mathfrak{u}(3)_{\alpha}\oplus \mathfrak{u}(3)_{-\alpha}\oplus \mathfrak{u}(3)_{\beta}\oplus \mathfrak{u}(3)_{-\beta}\oplus \mathfrak{u}(3)_{\gamma}\oplus \mathfrak{u}(3)_{-\gamma}
    \end{align*}
    where
    $$
        \mathfrak{u}(3)_{\alpha}=\mbox{span}_{\mathbb{C}}\{e_4-ie_7\},
    $$
    $$
        \mathfrak{u}(3)_{\beta}=\mbox{span}_{\mathbb{C}}\{e_5+ie_8\},
    $$
    $$
        \mathfrak{u}(3)_{\gamma}=\mbox{span}_{\mathbb{C}}\{e_6+ie_9\},
    $$
    with positive root system $\Delta^+=\{\alpha,\beta,\gamma\}$ given by
    $$
        \alpha(e_1)=i,~\alpha(e_2)=-i,~\alpha(e_3)=0,
    $$
    $$
        \beta(e_1)=-i,~\beta(e_2)=0,~\beta(e_3)=i,
    $$
    $$
        \gamma(e_1)=0,~\gamma(e_2)=-i,~\gamma(e_3)=i.
    $$
    Let $\mathcal{G}:=\mbox{Lie}(TU(3))$. A basis for $\mathcal{G}_{\mathbb{C}}$ is 
    $$
e_1^c,~e_2^c,~e_3^c,~E_\alpha^c,~E^c_\beta,~E^c_\gamma,E_{-\alpha}^c,~E^c_{-\beta},~E^c_{-\gamma},
    $$
    $$
~e_1^v,~e_2^v,~e_3^v,~E_\alpha^v,~E^v_\beta,~E^v_\gamma,~E_{-\alpha}^v,~E^v_{-\beta},~E^v_{-\gamma},
    $$
    where
    $$
    E_\alpha=e_4-ie_7,~E_\beta=e_5+ie_8,~E_\gamma=e_6+ie_9,
    $$
    $$
    E_{-\alpha}=e_4+ie_7,~E_{-\beta}=e_5-ie_8,~E_{-\gamma}=e_6-ie_9.
    $$
    From the proof of Theorem \ref{ThmMainTheorem}, we obtain an integrable left-invariant almost complex structure by defining $\widehat{J}:\mathcal{G}\rightarrow \mathcal{G}$ to be the linear map whose $\mathbb{C}$-linear extension satisfies
    $$
    \widehat{J}e_1^c=e_1^v,~\widehat{J}e_2^c=e_2^v,~\widehat{J}e_3^c=e_3^v,
    $$
    $$
    \widehat{J}E^c_\alpha=iE^c_\alpha,~\widehat{J}E^c_{\beta}=iE^c_{\beta},~\widehat{J}E^c_{\gamma}=iE^c_{\gamma},
    $$
    $$
    \widehat{J}E^c_{-\alpha}=-iE^c_{-\alpha},~\widehat{J}E^c_{-\beta}=-iE^c_{-\beta},~\widehat{J}E^c_{-\gamma}=-iE^c_{-\gamma},
    $$
    $$
    \widehat{J}e_1^v=-e_1^c,~\widehat{J}e_2^v=-e_2^c,~\widehat{J}e_3^v=-e_3^c,
    $$
    $$
    \widehat{J}E^v_\alpha=iE^v_\alpha,~\widehat{J}E^v_{\beta}=iE^v_{\beta},~\widehat{J}E^v_{\gamma}=iE^v_{\gamma},
    $$
    $$
    \widehat{J}E^v_{-\alpha}=-iE^v_{-\alpha},~\widehat{J}E^v_{-\beta}=-iE^v_{-\beta},~\widehat{J}E^v_{-\gamma}=-iE^v_{-\gamma}.
    $$
    Expressing $\widehat{J}$ in terms of the 
    (real) basis of $\mathcal{G}$ given by
    $$
    e_1^c,~e_2^c,~\dots, e_9^c,~e_1^v,~e_2^v,~\dots, e_9^v,
    $$
    we have 
    $$
    \widehat{J}e_1^c=e_1^v,~\widehat{J}e_2^c=e_2^v,~\widehat{J}e_3^c=e_3^v,
    $$
    $$
    \widehat{J}e_4^c=e_7^c,~\widehat{J}e_5^c=-e^c_8,~\widehat{J}e^c_6=-e^c_9,
    $$
    $$
    \widehat{J}e^c_7=-e^c_4,~\widehat{J}e^c_8=e^c_5,~\widehat{J}e^c_9=e^c_6,
    $$
    $$
    \widehat{J}e_1^v=-e_1^c,~\widehat{J}e_2^v=-e_2^c,~\widehat{J}e_3^v=-e_3^c,
    $$
    $$
    \widehat{J}e_4^v=e_7^v,~\widehat{J}e_5^v=-e^v_8,~\widehat{J}e^v_6=-e^v_9,
    $$
    $$
    \widehat{J}e^v_7=-e^v_4,~\widehat{J}e^v_8=e^v_5,~\widehat{J}e^v_9=e^v_6.
    $$
\end{example}

\end{document}